\newtheorem{theorem}{Theorem}[section]
\newtheorem{lemma}[theorem]{Lemma}
\newtheorem{remark}[theorem]{Remark}
\newtheorem{assumption}[theorem]{Assumption}
\newcommand{\var}{\mathrm{Var}}
\newcommand{\diag}{\mathrm{diag}}
\newcommand{\argmin}{\mathop{\rm arg~min}\limits}
\title{Quasi-Likelihood Analysis of Fractional Brownian Motion with Constant Drift under High-Frequency Observations}
\author
{Tetsuya Takabatake\footnote{tkbtk@hiroshima-u.ac.jp}}
\affil
{School of Economics, Hiroshima University\footnote{2-1 Kagamiyama 1-Chome, Higashi-Hiroshima, Hiroshima, Japan}}
\date{\today}
\begin{document}
\maketitle

\begin{abstract}
	Consider an estimation of the Hurst parameter $H\in(0,1)$ and the volatility parameter $\sigma>0$ for a fractional Brownian motion with a drift term 
	under high-frequency observations with a finite time interval. 
	In the present paper, we propose a consistent estimator of the parameter $\theta=(H,\sigma)$ combining the ideas of a quasi-likelihood function based on a local Gaussian approximation of a high-frequently observed time series and its frequency-domain approximation.	
	Moreover, we prove an asymptotic normality property of the proposed estimator for all $H\in(0,1)$ when the drift process is constant. 
\end{abstract}
\vspace{0.2cm}

\section{Introduction}
Let $(\Omega,\mathcal{F},P)$ be a complete probability space. 
Consider the 
stochastic process $X^{\theta}=\{X^{\theta}_{t}\}_{t\in[0,1]}$ defined on $(\Omega,\mathcal{F},P)$ of the form
\begin{equation}\label{def_X}
 \mathrm{d}X_{t}^{\theta}=\mu_{t}\,\mathrm{d}t+\sigma\,\mathrm{d}B_{t}^{H},\ \ X_{0}^{\theta}=\xi_{0},
 \ \ \theta=(H,\sigma)\in(0,1)\times(0,\infty),
\end{equation}
where 
$B^{H}$ is a fractional Brownian motion~(fBm) with Hurst parameter 
$H$, $\{\mu_{t}\}_{t\in[0,1]}$ is a continuous stochastic process and $\xi_{0}$ is a random variable. 
The stochastic process in (\ref{def_X}) is, for example, used for 
the log-volatility process and such volatility models recently attract much attention from researchers in mathematical finance and financial econometrics and practitioners in the financial industry,~e.g.~see 
\cite{Gatheral-Jaisson-Rosenbaum-2018}, \cite{Bayer-Friz-Gatheral-2016} and \cite{Fukasawa-Takabatake-Westphal-2022+} for details. 
The aim of the present paper is to investigate an asymptotic distribution of a quasi-likelihood-type estimator of the parameter $\theta$ 
based on the high-frequency data $\mathbf{X}^{\theta}_{n}:=(X^{\theta}_{0},X^{\theta}_{\delta_{n}},\cdots,X^{\theta}_{n\delta_{n}})$ with $\delta_{n}:=1/n$ when the sample size $n$ goes to infinity. 

First, note that $\theta$ is the only 
parameter to be estimated in (\ref{def_X}) because 
we can not consistently estimate drift parameters even if a sample path of $\{X_t\}_{t\in[0,1]}$ is continuously observed. 
Under high-frequency asymptotics, i.e.~$\delta_{n}\to 0$ as $n\to 0$, we can show
\begin{equation}\label{local_Gaussian_app1}
	X_{j\delta_{n}}^\theta-X_{(j-1)\delta_{n}}^\theta
	=\sigma(B_{j\delta_{n}}^{H}-B_{(j-1)\delta_{n}}^{H})+O_{P}(\delta_{n})\ \ \mbox{as $n\to\infty$}
\end{equation}
and, thanks to the self-similarity property of the fBm, we have
\begin{equation}\label{local_Gaussian_app2}
	\sigma(B_{j\delta_{n}}^{H}-B_{(j-1)\delta_{n}}^{H})
	\stackrel{\mathcal{L}}{=}\sigma\delta_{n}^{H}(B_{j}^{H}-B_{j-1}^{H}),\ \ j=1,\cdots,n,
\end{equation}
where $\stackrel{\mathcal{L}}{=}$ means that the equality holds in law. 
If we know the Hurst parameter $H=1/2$, 
then the volatility $\sigma$ is the only parameter to be estimated and it is well-known that 
the QMLE~(Quasi-Maximum Likelihood Estimator) of $\sigma$ based on the local Gaussian approximation~(\ref{local_Gaussian_app1}), 
which is same as the quadratic variation of $X^{\theta}$ with equidistant sampling $t_{j}^{n}:=j/n$ since the fBm has the independent increments property when $H=1/2$, 
is consistent and asymptotically normal 
as $n\to\infty$ under some mild technical assumptions of $\{\mu_{t}\}_{t\in[0,1]}$, e.g.~see \cite{Fukasawa-2010}.
Then the drift term can be seen as a nuisance parameter because 
the QMLE of $\sigma$ 
can be computed 
without identifying the drift term and its asymptotic distribution does not depend on the drift term. 

Thanks to (\ref{local_Gaussian_app1}) and (\ref{local_Gaussian_app2}), 
even if 
$H\in(0,1)$ is unknown, 
it would be possible to 
consistently estimate the parameter $\theta$ without identifying the drift term under high-frequency asymptotics 
using a 
quasi-likelihood-type estimator based on the local Gaussian approximation (\ref{local_Gaussian_app1}). 
On the other hand, it is unclear whether an asymptotic distribution of the 
quasi-likelihood-type estimator does not depend on the drift term because (\ref{local_Gaussian_app1}) and (\ref{local_Gaussian_app2}) imply that it becomes more difficult to distinguish between the noise and drift terms of 
$X^{\theta}$ from high-frequency data when $H$ approaches $1$.

Recently, \cite{Gairing-Imkeller-Shevchenko-Tudor-2020} proposed
an estimator of the parameter $\theta$ using the change-of-frequency method 
and proved its asymptotic normality property for all $H\in(0,H_{+}]$ with $H_{+}\in(0,1)$ under the technical condition $\delta_{n}:=n^{-\alpha}$ for some $\alpha\geq 1$ satisfying $\alpha>\frac{2H_{+}-1}{2(1-H_{+})}\geq 1$ if $H_{+}\geq 3/4$. 
We remark that the condition $\alpha>1$ assumed in the case $H_{+}\geq 3/4$ 
is not standard because it implies the length of the observation period $T_{n}:=n\delta_{n}=n^{1-\alpha}$ converges to zero. 
Therefore, it is not trivial whether their proposed estimator enjoys the same asymptotic normality property when $H\geq 3/4$ under the condition $\delta_{n}:=1/n$. 
Moreover,
their proposed estimator is, of course, not 
optimal because the covariance structure of the noise is not used in their estimation procedure. 

In the present paper, we propose an estimator of the parameter $\theta$ combining the following two ideas in the similar way to \cite{Fukasawa-Takabatake-Westphal-2022+}:~(1)~the local Gaussian approximation~(\ref{local_Gaussian_app1}) and (2)~the frequency domain approximation, the so-called Whittle approximation, of the quasi-likelihood function, see Section~\ref{Subsection_QWLE} for details.  
Then we can easily prove the consistency of the proposed estimator in the similar way to Theorem~2.8 of \cite{Fukasawa-Takabatake-Westphal-2022+}. 
Our contribution in the present paper is to prove that $(1)$~the proposed estimator enjoys the asymptotic normality property for all $H\in(0,1)$ even when $\delta_{n}:=1/n\to 0$ as $n\to\infty$, 
which implies the technical condition of $\delta_{n}$ assumed in \cite{Gairing-Imkeller-Shevchenko-Tudor-2020} is not essential to derive asymptotic distributions of estimators of $\theta$, and $(2)$~an asymptotic distribution of the proposed estimator does not depend on the drift term for all $H\in(0,1)$, at least, 
when $\{\mu_{t}\}_{t\in[0,1]}$ is . 
constant. 

The present paper is organized as follows. 
We summarize preliminary results and notation in Section~\ref{Section_Preliminary_Results}. 
In Section~\ref{Section_Main_Result}, our proposed estimator is defined and a main theorem in the present paper is given. 
The main theorem is proven in Section~\ref{Section_Proof_Main_Theorem} and a preliminary lemma used in the proof of the main theorem is proven  in Section~\ref{Section_Proof_Key_Lemma}.   

\section{Preliminary Results}\label{Section_Preliminary_Results}
\subsection{Fractional Brownian Motion}
A centered continuous Gaussian process $\{B_t^H\}_{t\in\mathbb{R}}$, defined on a complete probability space $(\Omega,\mathcal{F},P)$, is called a fractional Brownian motion (fBm) with Hurst parameter $H\in(0,1]$ if $B_{0}^{H}=0$ $P$-a.s. and it satisfies the following scaling property:
\begin{equation}\label{fBm_scaling}
	E[|B_{t}^{H}-B_{s}^{H}|^{2}]=|t-s|^{2H}\ \ \mbox{for any $s, t\in\mathbb{R}$}.
\end{equation}
From (\ref{fBm_scaling}), it is obvious that the fBm has the stationary increments and self-similarity properties.  
Moreover, it is well-known that the spectral density function of the stationary increments process $\{B_{t}^{H}-B_{t-1}^{H}\}_{t\in\mathbb{Z}}$ 
is given by
\begin{equation}\label{spd_fGn}
	f_{H}(\lambda):=C_H\{2(1-\cos{\lambda})\}\sum_{j\in\mathbb{Z}}\frac{1}{|\lambda+2\pi j|^{1+2H}},\ \ \lambda\in[-\pi,\pi],
\end{equation}
with $C_H:=(2\pi)^{-1}\Gamma(2H+1)\sin(\pi H)$, e.g. see 
\cite{Samorodnitsky-Taqqu-1994}.
\subsection{Notation}
Consider the parameter space 
$\Theta:=\Theta_{H}\times(0,\infty)$, where $\Theta_{H}$ is a compact set of $(0,1)$. 
Denote by $\theta_{0}=(H_{0},\sigma_{0})$ the true value of the parameter $\theta=(H,\sigma)$. 
Let $n\in\mathbb{N}$ be the sample size and $\delta_{n}:=1/n$ be the length of sampling intervals. 
Denote by $\mathbf{1}_{n}:=(1,\cdots,1)\in\mathbb{R}^{n}$ and
\begin{align*}
	&\Delta\mathbf{X}_{n}^{\theta}:=(X_{\delta_{n}}^{\theta}-X_{0}^{\theta}, X_{2\delta_{n}}^{\theta}-X_{\delta_{n}}^{\theta}, \cdots, X_{n\delta_{n}}^{\theta}-X_{(n-1)\delta_{n}}^{\theta}),\\ 
	&\Delta\mathbf{B}_{n}^{\theta}:=(\sigma B_{\delta_{n}}^{H}, \sigma(B_{2\delta_{n}}^{H}-B_{\delta_{n}}^{H}), \cdots, \sigma(B_{n\delta_{n}}^{H}-B_{(n-1)\delta_{n}}^{H})).
\end{align*}
Moreover, set $\Delta\widetilde{\mathbf{X}}_{n}:=\delta_{n}^{-H_{0}}b(H_{0})^{-\frac{1}{2}}\Delta\mathbf{X}_{n}^{\theta_{0}}$ and $\Delta\widetilde{\mathbf{B}}_{n}:=\delta_{n}^{-H_{0}}b(H_{0})^{-\frac{1}{2}}\Delta\mathbf{B}_{n}^{\theta_{0}}$.

	Let $\tau\in\mathbb{Z}$. For an integrable function $f:[-\pi,\pi]\to[-\infty,\infty]$, 
	the $\tau$-th Fourier coefficient of $f$ is defined by
	\begin{equation*}
		\widehat{f}(\tau):=\int_{-\pi}^{\pi}e^{\sqrt{-1}\tau x}f(x)\,\mathrm{d}x.
	\end{equation*}	
	We denote by $T_n(f)$ the $n\times n$-Toeplitz matrix whose $(i,j)$-element is given by $\widehat{f}(i-j)$ for each $i,j=1,\cdots,n$. 
Thanks to the self-similarity property of the fBm, we can write 
$\var[\Delta\mathbf{B}_{n}^{\theta}]=T_{n}(f_{\theta}^{n})$ with $f_{\theta}^{n}(\lambda):=\sigma^{2}\delta_{n}^{2H}f_{H}(\lambda)$. 
Define by 
\begin{align*}
	&b(H):=\exp\left(\frac{1}{2\pi}\int_{-\pi}^{\pi}\log{f_{H}(\lambda)}\,\mathrm{d}\lambda\right),\ \ g_{H}(\lambda):=b(H)^{-1}f_{H}(\lambda),\\
	&h_{H}(\lambda):=1/(4\pi^{2}g_{H}(\lambda)),\ \ \nu_{n}^2(H):=\frac{1}{n}\left\langle\Delta\mathbf{X}_{n}^{\theta_{0}},T_{n}(h_{H})\Delta\mathbf{X}_{n}^{\theta_{0}}\right\rangle_{\mathbb{R}^{n}}.
\end{align*}
Set $h_{H}^{(j)}(\lambda):=(\partial/\partial{H})^{j}h_{H}(\lambda)$ and $g_{H}^{(j)}(\lambda):=(\partial/\partial{H})^{j}g_{H}(\lambda)$ for $j=0,1$. 
Finally, $\stackrel{\mathcal{L}}{\to}$ denotes the convergence in law under the probability measure $P$.
\section{Main Result}\label{Section_Main_Result}
\subsection{Quasi-Whittle Likelihood Estimator~(QWLE)}\label{Subsection_QWLE}
In the present paper, we 
propose the estimator $\widehat{\theta}_{n}:=(\widehat{H}_{n},\widehat{\sigma}_{n})$ defined by
\begin{equation}\label{definition_QWLE}
	\widehat{H}_{n}:=\argmin_{H\in\Theta_H}\nu_{n}^{2}(H),\ \ 
	\widehat{\sigma}_{n}:=\delta_{n}^{-\widehat{H}_{n}}b(\widehat{H}_{n})^{-\frac{1}{2}}\sqrt{\nu_{n}^{2}(\widehat{H}_{n})}.
\end{equation}
We call $\widehat{\theta}_{n}$ the QWLE~(Quasi-Whittle Likelihood Estimator) in the following. 
In the rest of this subsection, we make several remarks on the proposed estimator 
in order.
\begin{remark}[\it Local Gaussian Approximation and Whittle Approximation]\rm
Thanks to (\ref{local_Gaussian_app1}) and (\ref{local_Gaussian_app2}), 
it would be possible that 
$\Delta{\mathbf{X}}_{n}^{\theta}$ is approximated by the Gaussian vector $\Delta{\mathbf{B}}_{n}^{\theta}$ in some suitable sense under high-frequency asymptotics. 
Therefore, 
we use the likelihood function of $\Delta{\mathbf{B}}_{n}^{\theta}$ as a quasi-likelihood function of $\Delta{\mathbf{X}}_{n}^{\theta}$. 
Actually, we utilize 
an approximate likelihood function of $\Delta{\mathbf{B}}_{n}^{\theta}$ in the frequency domain, the so-called Whittle likelihood function, as a quasi-likelihood function of $\Delta{\mathbf{X}}_{n}^{\theta}$ because the quasi-maximum likelihood estimator is computationally infeasible when the sample size $n$ is quite large due to the high computational cost of the inverse and determinant of $\mathrm{Var}[\Delta{\mathbf{B}}_{n}^{\theta}]$. 

In the rest of this remark, 
we explain why the estimator $\widehat{\theta}_{n}$ is defined by (\ref{definition_QWLE}). 
Set $\nu_{n}\equiv\nu_{n}(\theta):=\sigma\delta_{n}^{H}b(H)^{\frac{1}{2}}$ and $\widehat{\nu}_{n}:=\{\nu_{n}^{2}(\widehat{H}_{n})\}^{1/2}$. 
	First note that $\nu_{n}^{2}g_{H}(\lambda)$ is the spectral density function of 
	the stationary Gaussian sequence $\{\sigma(B_{j\delta_{n}}^{H}-B_{(j-1)\delta_{n}}^{H})\}_{j\in\mathbb{Z}}$ with respect to the reparameterized parameter $(H,\nu_{n})$. 
	Since we have 
	\begin{equation}\label{key_repara_spd}
      \frac{1}{2\pi}\int_{-\pi}^{\pi}\log{g_{H}(\lambda)}\,\mathrm{d}\lambda=0    
    \end{equation}
    for all $H\in(0,1)$,  
    the quasi-Whittle likelihood function of 
    $\Delta\mathbf{X}_{n}^{\theta_{0}}$ with respect to the reparameterized parameter $(H,\nu_{n})$ is defined by
     \begin{equation*}
     	{L}_{n}^{(0)}(H,\nu_{n}):=\frac{1}{2}\left(\log{\nu_{n}^{2}}+\frac{1}{\nu_{n}^{2}}\nu_{n}^{2}(H)\right).
     \end{equation*}
     Since $(\widehat{H}_{n},\widehat{\nu}_{n})$ is 
     a minimizer of the quasi-Whittle likelihood function ${L}_{n}^{(0)}(H,\nu_{n})$ on $\Theta_{H}\times(0,\infty)$
     for each $n\in\mathbb{N}$,  
     the estimator $\widehat\sigma_{n}$ can be defined as (\ref{definition_QWLE}) using the estimator $(\widehat{H}_{n},\widehat{\nu}_{n})$ 
	and the relation $\nu_{n}=\sigma\delta_{n}^{H}b(H)^{\frac{1}{2}}$.     
\end{remark}
\begin{remark}[\textit{Reparameterization}]\rm
	Under high-frequency observations, the effects of $\sigma$ and $H$
	fuse in the limit and the asymptotic Fisher information matrix when a ``diagonal'' rate-matrix is used becomes
	singular due to the self-similarity
	property of the fractional Gaussian noise. 
	As a result, it is necessary to reparametrize the parameter
	$\sigma$ in order to obtain a limit theorem of an estimator. 
	See \cite{Brouste-Fukasawa-2018} and \cite{Fukasawa-Takabatake-2019} for more details.

	In the rest of this remark, we briefly explain 
	of our strategy to prove asymptotic properties of 
	the QWLE $\widehat{\theta}_{n}$. 
	First note that 
	$\widehat{H}_{n}$ is also a minimizer of the function
	\begin{equation*}
		\widetilde{\sigma}_n^2(H):=\delta_{n}^{-2H_{0}}b(H_{0})^{-1}\nu_{n}^{2}(H)
	\end{equation*}
	with respect to $H$ on $\Theta_{H}$ 
	so that the random variable $\widetilde{\theta}_{n}:=(\widehat{H}_{n},\widetilde{\sigma}_{n})$ with $\widetilde{\sigma}_{n}:=\{\widetilde{\sigma}_{n}^{2}(\widehat{H}_{n})\}^{1/2}$ is a minimizer of the function
	\begin{equation}\label{pseudo-WL}
       	L_{n}(\theta):=\frac{1}{2}\left(\log{\sigma^2}+\frac{1}{\sigma^2}\widetilde{\sigma}_n^2(H)\right)
       	=\frac{1}{2}\left(\log{\sigma^2}+\frac{1}{\sigma^{2}n}\left\langle\Delta\widetilde{\mathbf{X}}_{n},T_{n}(h_{H})\Delta\widetilde{\mathbf{X}}_{n}\right\rangle_{\mathbb{R}^{n}}\right)
    \end{equation}
    with respect to $\theta=(H,\sigma)$ on $\Theta_{H}\times(0,\infty)$. 
    Note that $L_{n}(\theta)$ is not an estimation function since the true value $H_{0}$ is used in its definition. 
    It plays, however, a similar role to the usual estimation function in  
	proofs of asymptotic properties of the QWLE 
    because 
    $L_{n}(\theta)$ is the quasi-Whittle likelihood function of the suitably rescaled random vector $\Delta\widetilde{\mathbf{X}}_{n}$  
    and 
    the quasi-spectral density function $\sigma^{2}g_{H}(\lambda)$ 
    which appears in $L_{n}(\theta)$ no longer depends on the asymptotic parameter $n$. 
    Therefore, 
    it would be possible that the random variable $\widetilde{\theta}_{n}$ converges to $\theta_{0}$ in some suitable sense as $n\to\infty$ and asymptotic properties of the estimator $\widehat{\theta}_{n}$ can be proven using the convergence of $\widetilde{\theta}_{n}$. 
    See Section~5.2 of \cite{Fukasawa-Takabatake-2019}, Section~4 of \cite{Fukasawa-Takabatake-Westphal-2022+} and Section~\ref{section_proof_asynormal} for more details.
\end{remark}
\begin{remark}[\it Implementation]\rm
	In this remark, we briefly explain how to efficiently implement the QWLE. 
	First note that we can write 
	\begin{equation}\label{nu_H_integral}
		\nu_{n}^2(H)=\frac{1}{2\pi}\int_{-\pi}^{\pi}\frac{I_{n}(\lambda,\Delta\mathbf{X}_{n}^{\theta_{0}})}{g_{H}(\lambda)}\,\mathrm{d}\lambda,
	\end{equation}
	where $I_{n}(\lambda,x)$ is the periodogram defined by
	\begin{equation*}
		I_{n}(\lambda,x):=\frac{1}{2\pi n}\left|\sum_{j=1}^{n}x_{j}e^{\sqrt{-1}j\lambda}\right|,\ \ \lambda\in[-\pi,\pi],\ \ x=(x_{1},\cdots,x_{n})\in\mathbb{R}^{n}.
	\end{equation*}
	Then the Riemann approximation of the integral~(\ref{nu_H_integral}) 
	gives
	\begin{equation}\label{nu_H_sum}
		\nu_{n}^2(H)\approx\frac{1}{n}\sum_{j=1}^{n}\frac{I_{n}(\lambda_{j}^{n},\Delta\mathbf{X}_{n}^{\theta_{0}})}{g_{H}(\lambda_{j}^{n})},\ \ \lambda_{j}^{n}:=\frac{2\pi j}{n},
	\end{equation}
	and the sum in (\ref{nu_H_sum}) can be effectively computed using the fast Fourier transform algorithm. 
	Note that the series appears in the  
	function $g_{H}(\lambda)$ 
	can be accurately and efficiently computed using the approximation method proposed by \cite{Paxson-1997}. See also \cite{Fukasawa-Takabatake-2019} and its supplementary article~\cite{Fukasawa-Takabatake-2019-supplement} for more details.
\end{remark}
\subsection{Asymptotic Normality Property of QWLE}
First, we introduce a class of sequences of non-diagonal rate matrices which plays a key role to prove an asymptotic normality property of QWLE with a non-degenerate asymptotic variance-covariance matrix. 
\begin{assumption}\label{assumption_rate_matrix}
	Assume a sequence of matrices $\{\varphi_{n}(\theta)\}_{n\in\mathbb{N}}$ and a matrix $\overline{\varphi}(\theta)$ of the forms
	\begin{equation*}
		\varphi_{n}(\theta):=\frac{1}{\sqrt{n}}
		\begin{pmatrix}
		\varphi_{n}^{11}(\theta)&\varphi_{n}^{12}(\theta) \\
		\varphi_{n}^{21}(\theta)&\varphi_{n}^{22}(\theta)
		\end{pmatrix},\ \
		\overline{\varphi}(\theta):=
		\begin{pmatrix}
			\overline{\varphi}_{11}(\theta)&\overline{\varphi}_{12}(\theta)\\
			\overline{\varphi}_{21}(\theta)&\overline{\varphi}_{22}(\theta)
		\end{pmatrix}
	\end{equation*}
	satisfy the following properties for each $\theta\in\Theta$:
	\begin{enumerate}[$(1)$]
		\item $\varphi_{n}^{11}(\theta)\rightarrow\overline\varphi_{11}(\theta)$ as $n\to\infty$,
		\item $\varphi_{n}^{12}(\theta)\rightarrow\overline\varphi_{12}(\theta)$ as $n\to\infty$,
		\item $s_n^{21}(\theta):=\varphi_{n}^{11}(\theta)\sigma\log\delta_{n}+\varphi_{n}^{21}(\theta) \rightarrow\overline\varphi_{21}(\theta)$ as $n\to\infty$,
		\item $s_n^{22}(\theta):=\varphi_{n}^{12}(\theta)\sigma\log\delta_{n}+\varphi_{n}^{22}(\theta) \rightarrow\overline\varphi_{22}(\theta)$ as $n\to\infty$,
		\item $\varphi_{n}^{11}(\theta)\varphi_{n}^{22}(\theta)-\varphi_{n}^{12}(\theta)\varphi_{n}^{21}(\theta)\neq 0$ for each $n\in\mathbb{N}$,
		\item $\overline\varphi_{11}(\theta)\overline\varphi_{22}(\theta)-\overline\varphi_{12}(\theta)\overline\varphi_{21}(\theta)\neq 0$.
	\end{enumerate}
\end{assumption}
Then we can prove a main theorem in the present paper as follows.
\begin{theorem}\label{theorem_asy-normality_QWLE}
	Consider a sequence of rate matrices $\{\varphi_{n}(\theta)\}_{n\in\mathbb{N}}$ satisfying Assumption~$\ref{assumption_rate_matrix}$. 
	Assume $\theta_{0}=(H_{0},\sigma_{0})$ is an interior point of $\Theta$. 
	Then we obtain the following result:
	\begin{enumerate}[$(1)$]
		\item The sequence of the QWLEs $\{\widehat\theta_{n}\}_{n\in\mathbb{N}}$ is (weakly) consistent as $n\to\infty$.
		\item If $\{\mu_{t}\}_{t\in[0,1]}$ is identically equal to a 
		$\mathcal{F}$-measurable random variable $\mu$, then the sequence of the QWLEs $\{\widehat\theta_{n}\}_{n\in\mathbb{N}}$ 
	satisfies the following asymptotic normality property:
	\begin{equation}\label{asy-normality_QWLE}
		\varphi_n(\theta_{0})^{-1}(\widehat\theta_{n}-\theta_{0})
		\stackrel{\mathcal{L}}{\to}\mathcal{N}\left(0,\mathcal{I}(\theta_{0})^{-1}\right)\ \ \mbox{as $n\to\infty$},
	\end{equation}
	where $\mathcal{I}(\theta)$ is the positive definite matrix defined by
	\begin{align*}
		\mathcal{I}(\theta):=\overline{\varphi}(\theta)^{\ast}\mathcal{F}(\theta)\overline{\varphi}(\theta),\ \ \mathcal{F}(\theta):=\frac{1}{4\pi}\int_{-\pi}^{\pi}\left(\frac{\partial}{\partial{\theta}}\log{f_{\theta}}(\lambda)\right)
		\left(\frac{\partial}{\partial{\theta}}\log{f_{\theta}}(\lambda)\right)^{\ast}\,\mathrm{d}\lambda.
	\end{align*}
	\end{enumerate}
\end{theorem}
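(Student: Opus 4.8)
The plan is to adopt the quasi-Whittle M-estimation framework of \cite{Fukasawa-Takabatake-Westphal-2022+} and \cite{Fukasawa-Takabatake-2019}, reducing every statement to the asymptotics of Toeplitz quadratic forms in the rescaled increment vector $\Delta\widetilde{\mathbf{X}}_{n}$, and, for the drift, to exploit the decomposition
\begin{equation*}
	\Delta\widetilde{\mathbf{X}}_{n}=\Delta\widetilde{\mathbf{B}}_{n}+c_{n}\mathbf{1}_{n},\qquad c_{n}:=\mu\,\delta_{n}^{1-H_{0}}b(H_{0})^{-\frac{1}{2}},
\end{equation*}
which holds when $\mu_{t}\equiv\mu$; here $\Delta\widetilde{\mathbf{B}}_{n}$ is a centred stationary Gaussian vector with spectral density $\sigma_{0}^{2}g_{H_{0}}$ and $c_{n}\to 0$ since $H_{0}<1$.

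For part $(1)$ I would argue exactly as in Theorem~2.8 of \cite{Fukasawa-Takabatake-Westphal-2022+}. The local Gaussian approximation (\ref{local_Gaussian_app1})--(\ref{local_Gaussian_app2}) with the continuity of $\{\mu_{t}\}$ makes the drift increments uniformly $O_{P}(\delta_{n})$, so $\widetilde{\sigma}_{n}^{2}(H)=n^{-1}\langle\Delta\widetilde{\mathbf{X}}_{n},T_{n}(h_{H})\Delta\widetilde{\mathbf{X}}_{n}\rangle$ has the same probabilistic limit as its pure-noise counterpart; a uniform (in $H\in\Theta_{H}$) law of large numbers for Toeplitz quadratic forms then gives $\widetilde{\sigma}_{n}^{2}(H)\stackrel{P}{\to}\bar{\sigma}^{2}(H):=\frac{\sigma_{0}^{2}}{2\pi}\int_{-\pi}^{\pi}g_{H_{0}}(\lambda)/g_{H}(\lambda)\,\mathrm{d}\lambda$. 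Because $\frac{1}{2\pi}\int_{-\pi}^{\pi}\log g_{H}=0$ for all $H$ by (\ref{key_repara_spd}), Jensen's inequality yields $\bar{\sigma}^{2}(H)\ge\sigma_{0}^{2}$ with equality if and only if $g_{H}\equiv g_{H_{0}}$, and the injectivity of $H\mapsto g_{H}$ (visible from $g_{H}(\lambda)\sim c(H)|\lambda|^{1-2H}$ as $\lambda\to 0$) forces $H=H_{0}$; hence the limiting contrast $L(\theta)=\frac{1}{2}(\log\sigma^{2}+\bar{\sigma}^{2}(H)/\sigma^{2})$ is uniquely minimised at $\theta_{0}$, and the standard argmin argument gives $\widehat{H}_{n}\stackrel{P}{\to}H_{0}$ and hence $\widetilde{\theta}_{n}:=(\widehat{H}_{n},\widetilde{\sigma}_{n})\stackrel{P}{\to}\theta_{0}$. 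Finally the reparametrisation $\nu_{n}=\sigma\delta_{n}^{H}b(H)^{\frac{1}{2}}$, together with a $\sqrt{n}$-rate for $\widehat{H}_{n}$ obtained jointly with the machinery of part $(2)$ (so that $(\widehat{H}_{n}-H_{0})\log\delta_{n}\stackrel{P}{\to}0$), upgrades this to $\widehat{\sigma}_{n}\stackrel{P}{\to}\sigma_{0}$.

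For part $(2)$ I would work with $\widetilde{\theta}_{n}$, which minimises $L_{n}(\theta)$ in (\ref{pseudo-WL}); by part $(1)$ and the interiority of $\theta_{0}$ one has $\partial_{\theta}L_{n}(\widetilde{\theta}_{n})=0$ eventually, and a Taylor expansion around $\theta_{0}$ yields
\begin{equation*}
	\varphi_{n}(\theta_{0})^{-1}(\widetilde{\theta}_{n}-\theta_{0})=-\big(\varphi_{n}(\theta_{0})^{\ast}\partial_{\theta}^{2}L_{n}(\bar{\theta}_{n})\varphi_{n}(\theta_{0})\big)^{-1}\varphi_{n}(\theta_{0})^{\ast}\partial_{\theta}L_{n}(\theta_{0})
\end{equation*}
for some $\bar{\theta}_{n}$ between $\widetilde{\theta}_{n}$ and $\theta_{0}$. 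It then suffices to prove (a) the central limit theorem $\varphi_{n}(\theta_{0})^{\ast}\partial_{\theta}L_{n}(\theta_{0})\stackrel{\mathcal{L}}{\to}\mathcal{N}(0,\mathcal{I}(\theta_{0}))$ and (b) the law of large numbers $\varphi_{n}(\theta_{0})^{\ast}\partial_{\theta}^{2}L_{n}(\bar{\theta}_{n})\varphi_{n}(\theta_{0})\stackrel{P}{\to}\mathcal{I}(\theta_{0})$. Both reduce, through the entries $n^{-1}\langle\Delta\widetilde{\mathbf{X}}_{n},T_{n}(h_{H}^{(j)})\Delta\widetilde{\mathbf{X}}_{n}\rangle$ ($j=0,1$ and their $H$-derivatives) together with $\log\sigma$-terms, to the behaviour of Toeplitz quadratic forms. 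For the centred Gaussian part $n^{-1/2}(\langle\Delta\widetilde{\mathbf{B}}_{n},T_{n}(h_{H_{0}}^{(j)})\Delta\widetilde{\mathbf{B}}_{n}\rangle-E[\,\cdot\,])$ I would invoke a second-chaos CLT for Toeplitz quadratic forms of Fox--Taqqu / Giraitis--Surgailis type: the relevant products $g_{H_{0}}h_{H_{0}}^{(j)}$ are either constant ($j=0$) or have only a logarithmic singularity at the origin ($j=1$), hence lie in $L^{2}([-\pi,\pi])$, so the CLT condition holds for \emph{every} $H_{0}\in(0,1)$ --- the mutual cancellation of the singularities of $g_{H_{0}}$ and $h_{H_{0}}$ being exactly what keeps the Whittle quadratic form well behaved even when $H_{0}\ge 3/4$. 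The resulting limiting covariance, after the change of parametrisation $(H,\sigma)\leftrightarrow(H,\nu_{n})$ encoded by $\overline{\varphi}(\theta)$ and conditions $(1)$--$(6)$ of Assumption~\ref{assumption_rate_matrix}, is $\mathcal{I}(\theta_{0})=\overline{\varphi}(\theta_{0})^{\ast}\mathcal{F}(\theta_{0})\overline{\varphi}(\theta_{0})$; (b) is the analogous but easier unnormalised statement combined with $\bar{\theta}_{n}\stackrel{P}{\to}\theta_{0}$, continuity in $H$ and Assumption~\ref{assumption_rate_matrix}. The conclusion for $\widehat{\theta}_{n}$ then follows from that for $\widetilde{\theta}_{n}$ via $\widehat{\sigma}_{n}=\delta_{n}^{H_{0}-\widehat{H}_{n}}(b(H_{0})/b(\widehat{H}_{n}))^{1/2}\widetilde{\sigma}_{n}$, whose linearisation produces precisely the $\sigma\log\delta_{n}$ terms in conditions $(3)$--$(4)$ of Assumption~\ref{assumption_rate_matrix}.

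The main obstacle, and the content of the preliminary lemma proven in Section~\ref{Section_Proof_Key_Lemma}, is to show that the drift contributes nothing to the rescaled score $\varphi_{n}(\theta_{0})^{\ast}\partial_{\theta}L_{n}(\theta_{0})$ for every $H_{0}\in(0,1)$ under $\delta_{n}=1/n$. Substituting $\Delta\widetilde{\mathbf{X}}_{n}=\Delta\widetilde{\mathbf{B}}_{n}+c_{n}\mathbf{1}_{n}$ into each quadratic form, one must control the cross term $2c_{n}n^{-1/2}\langle\mathbf{1}_{n},T_{n}(h_{H_{0}}^{(j)})\Delta\widetilde{\mathbf{B}}_{n}\rangle$ and the pure-drift term $c_{n}^{2}n^{-1/2}\langle\mathbf{1}_{n},T_{n}(h_{H_{0}}^{(j)})\mathbf{1}_{n}\rangle$. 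Since $c_{n}^{2}\asymp n^{-(2-2H_{0})}$ while a careful estimate gives $\langle\mathbf{1}_{n},T_{n}(h_{H_{0}}^{(j)})\mathbf{1}_{n}\rangle\asymp n^{2-2H_{0}}$ up to logarithms --- the order $n^{2-2H_{0}}$, rather than $n$, being a consequence of $f_{H_{0}}(\lambda)\sim C_{H_{0}}|\lambda|^{1-2H_{0}}$ near $\lambda=0$, equivalently $h_{H_{0}}(0)=0$ once $H_{0}>1/2$ --- both terms are $o_{P}(1)$, with room to spare, for all $H_{0}\in(0,1)$; the cross term is handled by the same type of bound applied to $\langle\mathbf{1}_{n},T_{n}(h_{H_{0}}^{(j)})T_{n}(g_{H_{0}})T_{n}(h_{H_{0}}^{(j)})\mathbf{1}_{n}\rangle$ and the Gaussianity of $\Delta\widetilde{\mathbf{B}}_{n}$. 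This delicate balance between the shrinking drift amplitude $\delta_{n}^{1-H_{0}}$ and the low-frequency amplification by $1/g_{H_{0}}$ is exactly why $\delta_{n}=1/n$ suffices here, whereas $\delta_{n}=n^{-\alpha}$ with $\alpha>1$ was required for $H\ge 3/4$ in \cite{Gairing-Imkeller-Shevchenko-Tudor-2020}. When $\mu$ is merely an $\mathcal{F}$-measurable random variable, these bounds are of the form $\mu^{2}\times o(1)$ with a deterministic rate, so the conclusion persists and the score CLT is stable in law with a limit free of $\mu$.
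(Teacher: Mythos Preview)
Your proposal is correct and follows essentially the same route as the paper: consistency via the argument of \cite{Fukasawa-Takabatake-Westphal-2022+}, reduction of asymptotic normality to the score CLT $\sqrt{n}\nabla L_{n}(\theta_{0})\stackrel{\mathcal{L}}{\to}\mathcal{N}(0,\diag(\mathcal{G}(H_{0}),2\sigma_{0}^{-2}))$ handled by Fox--Taqqu for the pure-noise part, and elimination of the drift via the decomposition $\Delta\widetilde{\mathbf{X}}_{n}=\Delta\widetilde{\mathbf{B}}_{n}+c_{n}\mathbf{1}_{n}$ together with the bounds $\langle\mathbf{1}_{n},T_{n}(h_{H_{0}}^{(j)})\mathbf{1}_{n}\rangle=o(n^{2(1-H_{0})+\epsilon})$ and $\langle\mathbf{1}_{n},T_{n}(h_{H_{0}}^{(j)})T_{n}(g_{H_{0}})T_{n}(h_{H_{0}}^{(j)})\mathbf{1}_{n}\rangle=o(n^{2(1-H_{0})+\epsilon})$, which is exactly the content of Lemma~\ref{lemma_Whittle_app}. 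One small caveat: for the consistency of $\widehat{\sigma}_{n}$ in part~(1) you appeal to the $\sqrt{n}$-rate for $\widehat{H}_{n}$ ``obtained jointly with the machinery of part~(2)'', but part~(2) is stated only under constant drift whereas part~(1) is claimed for general continuous $\{\mu_{t}\}$; to avoid circularity you should obtain $(\widehat{H}_{n}-H_{0})\log\delta_{n}\stackrel{P}{\to}0$ directly from the consistency analysis (as in \cite{Fukasawa-Takabatake-Westphal-2022+}) rather than from the CLT.
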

Several examples of $\{\varphi_{n}(\theta)\}_{n\in\mathbb{N}}$ satisfying Assumption~\ref{assumption_rate_matrix} can be found in \cite{Fukasawa-Takabatake-2019}. 
Particular choices of $\{\varphi_{n}(\theta)\}_{n\in\mathbb{N}}$ imply that the convergence rates of $\widehat{H}_{n}$ and $\widehat{\sigma}_{n}$ are $\sqrt{n}$ and $\sqrt{n}/\log{\delta_{n}}$ respectively. 
See \cite{Fukasawa-Takabatake-2019} for details. 
\begin{remark}\rm
	In the case $\mu=0$, i.e.~$X^{\theta}=\sigma B^{H}$, Theorem~3 of \cite{Fukasawa-Takabatake-2019} proved that the Whittle estimator, defined in the same way as (\ref{definition_QWLE}), has the same asymptotic distribution as $(\ref{asy-normality_QWLE})$ 
	under high-frequency asymptotics. 
	Therefore, Theorem~$\ref{theorem_asy-normality_QWLE}$~(2) 
	implies that the asymptotic distribution of the QWLE does not depend on the drift term, at least, when $\{\mu_{t}\}_{t\in[0,1]}$ is constant.  
	We will investigate asymptotic properties of the QWLE when $\{\mu_{t}\}_{t\in[0,1]}$ is not constant 
	in the future work. 
\end{remark}
\section{Proof of Theorem~\ref{theorem_asy-normality_QWLE}}\label{Section_Proof_Main_Theorem}
\subsection{Preliminary Lemma}
Before proving Theorem~\ref{theorem_asy-normality_QWLE}, we prepare the following lemma. 
\begin{lemma}\label{lemma_Whittle_app}
	For any $\epsilon>0$, $H\in(0,1)$ and $j=0,1$, we have 
	\begin{align}
		&\left\langle\mathbf{1}_{n},T_{n}(h_{H}^{(j)})\mathbf{1}_{n}\right\rangle_{\mathbb{R}^{n}}=o(n^{2(1-H)+\epsilon})\ \ \mbox{as $n\to\infty$},\label{lemma_Whittle_app1}\\ 
		&\left\langle\mathbf{1}_{n},T_{n}(h_{H}^{(j)})T_{n}(g_{H})T_{n}(h_{H}^{(j)})\mathbf{1}_{n}\right\rangle_{\mathbb{R}^{n}}=o(n^{2(1-H)+\epsilon})\ \ \mbox{as $n\to\infty$}. \label{lemma_Whittle_app2}
	\end{align}
\end{lemma}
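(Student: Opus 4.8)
My plan is to derive both bounds from the spectral integral representation of a Toeplitz quadratic form in $\mathbf{1}_n$ (in which $\mathbf{1}_n$ contributes a Dirichlet kernel at each ``end''), followed by a rescaling of the frequency variables by $n$ that turns the Dirichlet kernels into a fixed $L^2$-function and makes all the $n$-dependence explicit. First I would collect the analytic inputs on the symbols. Isolating the $j=0$ term of the series in (\ref{spd_fGn}) gives the expansion $f_H(\lambda)=C_H|\lambda|^{1-2H}+O(\lambda^2)$ as $\lambda\to 0$, with all terms smooth in $H$ (this is established in \cite{Fukasawa-Takabatake-2019}); hence $g_H(\lambda)=b(H)^{-1}f_H(\lambda)\asymp|\lambda|^{1-2H}$ and $h_H(\lambda)=(4\pi^2g_H(\lambda))^{-1}\asymp|\lambda|^{2H-1}$ near the origin, both are smooth and bounded away from it, and one $H$-derivative costs only a factor $\log|\lambda|$. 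So for every $\epsilon>0$ there is $C_\epsilon>0$ with $|h_H^{(j)}(\lambda)|\le C_\epsilon|\lambda|^{2H-1-\epsilon}$ and $|g_H(\lambda)|\le C_\epsilon|\lambda|^{1-2H-\epsilon}$ on $[-\pi,\pi]$ for $j=0,1$. I would also use $|D_n(\lambda)|\le\min(n,\pi/|\lambda|)$ for $D_n(\lambda):=\sum_{k=1}^n e^{\sqrt{-1}k\lambda}$, so that $|D_n(x/n)|\le n\,m(x)$ with $m(x):=\min(1,\pi/|x|)$, together with the elementary fact that $|x|^{r}m(x)^{s}\in L^1(\mathbb{R})$ whenever $-1<r<s-1$.

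For (\ref{lemma_Whittle_app1}), expanding the quadratic form gives $\langle\mathbf{1}_n,T_n(h_H^{(j)})\mathbf{1}_n\rangle=\int_{-\pi}^\pi h_H^{(j)}(\lambda)\,|D_n(\lambda)|^2\,\mathrm{d}\lambda$. Bounding $|h_H^{(j)}|$ as above and substituting $\lambda=x/n$ turns the right-hand side into $n^{2-2H+\epsilon}$ times $\int_{|x|\le n\pi}|x|^{2H-1-\epsilon}m(x)^2\,\mathrm{d}x$, which is at most the finite integral over all of $\mathbb{R}$ because $2H-1-\epsilon\in(-1,1)$. As $\epsilon>0$ is arbitrary, this gives $O(n^{2(1-H)+\epsilon})$, hence the claimed $o(n^{2(1-H)+\epsilon})$ after shrinking $\epsilon$.

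For (\ref{lemma_Whittle_app2}), the same expansion gives
\begin{equation*}
\langle\mathbf{1}_n,T_n(h_H^{(j)})T_n(g_H)T_n(h_H^{(j)})\mathbf{1}_n\rangle=\int_{[-\pi,\pi]^3}h_H^{(j)}(\lambda)g_H(\mu)h_H^{(j)}(\nu)\,D_n(\lambda)D_n(\mu-\lambda)D_n(\nu-\mu)\overline{D_n(\nu)}\,\mathrm{d}\lambda\,\mathrm{d}\mu\,\mathrm{d}\nu ,
\end{equation*}
and, since $T_n(g_H)\succeq 0$, the left-hand side is nonnegative, so passing to absolute values loses nothing in order. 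Inserting the pointwise bounds on $h_H^{(j)},g_H$ and substituting $\lambda=x/n$, $\mu=y/n$, $\nu=z/n$, the quantity is dominated by $n^{1-2(2H-1-\epsilon)-(1-2H-\epsilon)}=n^{2-2H+3\epsilon}$ times
\begin{equation*}
\int_{\mathbb{R}^3}|x|^{2H-1-\epsilon}|y|^{1-2H-\epsilon}|z|^{2H-1-\epsilon}\,m(x)\,m(x-y)\,m(y-z)\,m(z)\,\mathrm{d}x\,\mathrm{d}y\,\mathrm{d}z .
\end{equation*}
The heart of the matter is that this integral is finite for every fixed $H\in(0,1)$, once $\epsilon$ is small enough. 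I would check this by a dyadic decomposition of $\mathbb{R}^3$ according to the sizes of $|x|,|y|,|z|,|x-y|,|y-z|$: convergence at the origin is immediate since $2H-1-\epsilon$ and $1-2H-\epsilon$ both exceed $-1$; at infinity the integrand is a monomial in the dyadic block sizes, and the resulting geometric series converges precisely because $2H-1-\epsilon<1$ and $2(2H-1-\epsilon)+(1-2H-\epsilon)-1=2H-2-3\epsilon<0$ — i.e.\ the positive exponent carried by $h_H^{(j)}$ (a zero of the symbol at the origin when $H>1/2$) is exactly what absorbs the blow-up $|y|^{1-2H-\epsilon}$ of $g_H$ there. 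The mildly ``resonant'' blocks, where $x\approx y$ or $y\approx z$ so that an $m$-factor is only bounded rather than decaying, sit in a lower-dimensional sub-block and contribute no more. Collecting powers of $n$ yields $O(n^{2(1-H)+3\epsilon})$, hence $o(n^{2(1-H)+\epsilon})$.

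The main obstacle is exactly this last step: four Dirichlet kernels and three power weights interact, and one must verify that the exponents conspire so that no residual power of $n$ survives, including in the resonant configurations — this is where essentially all the work lies, part (\ref{lemma_Whittle_app1}) and the reductions being routine. An alternative, should one prefer to offload the bookkeeping, is to use the operator identity $T_n(a)T_n(b)=T_n(ab)+R_n(a,b)$ with $R_n$ a product of Hankel matrices, rewriting the left-hand side of (\ref{lemma_Whittle_app2}) as $\langle\mathbf{1}_n,T_n((h_H^{(j)})^2g_H)\mathbf{1}_n\rangle$ plus Hankel remainders: the leading term has a symbol with only the integrable singularity $|\lambda|^{2H-1}$ (up to logarithms) and is controlled by (\ref{lemma_Whittle_app1}), while the remainders are bounded by Hankel-norm estimates of the type already developed in \cite{Fukasawa-Takabatake-2019} and \cite{Fukasawa-Takabatake-Westphal-2022+}.
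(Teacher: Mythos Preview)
Your approach is correct but takes a quite different route from the paper's. The paper proceeds operator-theoretically: for (\ref{lemma_Whittle_app1}) with $j=0$ it replaces $T_n(h_H)$ by $T_n(g_H)^{-1}$, controls the discrepancy by the Frobenius-norm estimate of Lemma~\ref{ext_lemma5.2_D89}, and then invokes Adenstedt's classical result on the asymptotics of $\langle\mathbf{1}_n,T_n(g_H)^{-1}\mathbf{1}_n\rangle$; the case $j=1$ is reduced to $j=0$ via an operator-norm bound from \cite{Lieberman-Rosemarin-Rousseau-2012}. For (\ref{lemma_Whittle_app2}) the paper compares with the ``exact'' operator $C_n^{(j)}(H)=T_n(g_H)^{-1}T_n(g_H^{(j)})T_n(g_H)^{-1}$, bounds the main term again via Adenstedt and \cite{Lieberman-Rosemarin-Rousseau-2012}, and handles the remainder through a chain of Frobenius-norm and trace computations that ultimately appeal to the trace-approximation theorem of \cite{Takabatake-2022-TraceApp}.

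Your direct Fourier-analytic argument---writing the quadratic form as an integral against Dirichlet kernels, rescaling by $n$, and bounding the resulting integral---is more elementary and self-contained: no external results beyond the symbol asymptotics are needed, and (\ref{lemma_Whittle_app1}) genuinely becomes a one-line computation. The price is that for (\ref{lemma_Whittle_app2}) you must verify finiteness of the triple integral with four $m$-factors and three power weights. Your identification of the governing exponent $2(2H-1-\epsilon)+(1-2H-\epsilon)-1=2H-2-3\epsilon<0$ is exactly right, and the dyadic case analysis does go through, but this is where all the work sits and it would need to be written out in full (the ``resonant'' configurations, in particular, deserve a line each). The paper instead offloads that bookkeeping onto the cited trace theorems; your approach trades external citations for an explicit, if somewhat laborious, integral estimate.
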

The proof of Lemma~\ref{lemma_Whittle_app} is left to Section~\ref{Section_Proof_Key_Lemma}.  
\subsection{Proof of Theorem~\ref{theorem_asy-normality_QWLE}}\label{section_proof_asynormal}
First, note that the consistency of the QWLE can be proven in the similar way to the proof of Theorem~2.8 of \cite{Fukasawa-Takabatake-Westphal-2022+}. 
In the following, we prove only the asymptotic normality property of the QWLE. 
In the similar way to the proof of Theorem~2.12 of \cite{Fukasawa-Takabatake-Westphal-2022+} and the proof of Theorem~3 of \cite{Fukasawa-Takabatake-2019}, the asymptotic normality property of the QWLE follows once we have proven
\begin{equation}\label{key_asynormality}
	\sqrt{n}\nabla{L}_n(\theta_{0})
	\stackrel{\mathcal{L}}{\to}\mathcal{N}\left(0,\diag\left(\mathcal{G}(H_{0}),2\sigma_{0}^{-2}\right)\right)\ \ \mbox{as $n\to\infty$},
\end{equation}
where the function $L_{n}(\theta)$ is defined by (\ref{pseudo-WL}) and 
\begin{equation*}
	\mathcal{G}(H):=\frac{1}{4\pi}\int_{-\pi}^{\pi}\left|\frac{\partial}{\partial{H}}\log{g_{H}(\lambda)}\right|^{2}\,\mathrm{d}\lambda.
\end{equation*}
Now we introduce notation used in the proof. 
Define by 
\begin{align*}
	A_{n}^{1}(\theta,x):=
	\frac{1}{2\sigma^{2}\sqrt{n}}\left\langle x,T_{n}(h_{H}^{(1)})x\right\rangle_{\mathbb{R}^{n}},\ \ 
	A_{n}^{2}(\theta,x):=
	\frac{1}{\sigma^{2}\sqrt{n}}\left(\left\langle x,T_{n}(h_{H}^{(0)})x\right\rangle_{\mathbb{R}^{n}}
	-\sigma^{2}\right)
\end{align*}
for $x\in\mathbb{R}^{n}$. Set $Y_{n}^{j}(\theta)\equiv A_{n}^{j}(\theta,\Delta\widetilde{\mathbf{X}}_{n})$ and $Z_{n}^{j}(\theta)\equiv A_{n}^{j}(\theta,\Delta\widetilde{\mathbf{B}}_{n})$ for $j=1,2$.
By a straight-forward calculation, we can write
\begin{equation*}
	\sqrt{n}\nabla{L}_n(\theta)=(Y_{n}^{1}(\theta),-\sigma^{-1}Y_{n}^{2}(\theta)).
\end{equation*}
Moreover, in the similar way to the proof of Theorem~2 of \cite{Fox-Taqqu-1986}, we can prove
\begin{equation*}
	\left(Z_{n}^{1}(\theta_{0}),-\sigma_{0}^{-1}Z _{n}^{2}(\theta_{0})\right)
	\stackrel{\mathcal{L}}{\to}\mathcal{N}\left(0,\diag\left(\mathcal{G}(H_{0}),2\sigma_{0}^{-2}\right)\right)\ \ \mbox{as $n\to\infty$}.
\end{equation*}
Therefore, in order to prove (\ref{key_asynormality}), it suffices to prove
\begin{equation}\label{key_asynormality2}
	\left\langle\Delta\widetilde{\mathbf{X}}_{n},T_{n}(h_{H_{0}}^{(j)})\Delta\widetilde{\mathbf{X}}_{n}\right\rangle_{\mathbb{R}^{n}}
	=\left\langle\Delta\widetilde{\mathbf{B}}_{n},T_{n}(h_{H_{0}}^{(j)})\Delta\widetilde{\mathbf{B}}_{n}\right\rangle_{\mathbb{R}^{n}}
	+o_{P}(n^{\epsilon})\ \ \mbox{as $n\to\infty$}
\end{equation}
for any $\epsilon>0$ and $j=0,1$. 
We prove (\ref{key_asynormality2}) in the rest of the proof. 
Since $\Delta\widetilde{\mathbf{X}}_{n}=\mu\delta_{n}^{1-H_{0}}b(H_{0})^{-1}\mathbf{1}_{n}+\Delta\widetilde{\mathbf{B}}_{n}$ and $\delta_{n}=1/n$, we can write
\begin{align}
	&\left\langle\Delta\widetilde{\mathbf{X}}_{n},T_{n}(h_{H_{0}}^{(j)})\Delta\widetilde{\mathbf{X}}_{n}\right\rangle_{\mathbb{R}^{n}}
	-\left\langle\Delta\widetilde{\mathbf{B}}_{n},T_{n}(h_{H_{0}}^{(j)})\Delta\widetilde{\mathbf{B}}_{n}\right\rangle_{\mathbb{R}^{n}} \nonumber\\
	&=
	2\mu b(H_{0})^{-1}n^{H_{0}-1}\left\langle\mathbf{1}_{n},T_{n}(h_{H_{0}}^{(j)})\Delta\widetilde{\mathbf{B}}_{n}\right\rangle_{\mathbb{R}^{n}}
	+\mu^2b(H_{0})^{-2}n^{2(H_{0}-1)}\left\langle\mathbf{1}_{n},T_{n}(h_{H_{0}}^{(j)})\mathbf{1}_{n}\right\rangle_{\mathbb{R}^{n}}. \label{key_asynormality3}
\end{align}
Moreover, we have 
\begin{equation}\label{key_asynormality6}
	\mathcal{L}\left\{\left\langle\mathbf{1}_{n},T_{n}(h_{H_{0}}^{(j)})\Delta\widetilde{\mathbf{B}}_{n}\right\rangle_{\mathbb{R}^{n}}\bigl|P\right\}
	\sim\mathcal{N}\left(0,\left\langle\mathbf{1}_{n},T_{n}(h_{H_{0}}^{(j)})T_{n}(g_{H_{0}})T_{n}(h_{H_{0}}^{(j)})\mathbf{1}_{n}\right\rangle_{\mathbb{R}^{n}}\right).
\end{equation}
Therefore (\ref{key_asynormality2}) follows from 
(\ref{key_asynormality3}), (\ref{key_asynormality6}) and Lemma~\ref{lemma_Whittle_app}. This completes the proof.
\section{Proof of Lemma~\ref{lemma_Whittle_app}}\label{Section_Proof_Key_Lemma}
\subsection{Notation}
Suppose $A$ is a real-valued $n\times n$-matrix. Define the operator norm of $A$ by
\begin{equation*}
	\|A\|_{\mathrm{op}}:=\sup_{x\in\mathbb{R}^{n}}\frac{\|Ax\|_{\mathbb{R}^{n}}}{\|x\|_{\mathbb{R}^{n}}}
\end{equation*}
and the Frobenius norm of $A$ by
\begin{equation*}
	\|A\|_{F}:=\left(\mathrm{Tr}\left[AA^{\ast}\right]\right)^{\frac{1}{2}}.
\end{equation*} 
In the present paper, we use the following well-known properties:
\begin{enumerate}[$(1)$] 
	\item $\|AB\|_{\mathrm{op}}\leq\|A\|_{\mathrm{op}}\|B\|_{\mathrm{op}}$. 
	\item $\|Ax\|_{\mathbb{R}^{n}}\leq\|A\|_{\mathrm{op}}\|x\|_{\mathbb{R}^{n}}$ and $\|A\|_{\mathrm{op}}\leq\|A\|_{F}$. 
\end{enumerate}
\subsection{Preliminary Lemma}
Before proving Lemma~\ref{lemma_Whittle_app}, we prove the following preliminary lemma.
\begin{lemma}\label{ext_lemma5.2_D89}
	For any $\epsilon>0$ and $H\in(0,1)$, 
	\begin{equation*}
		\left\|I_{n}-T_{n}(g_{H})^{\frac{1}{2}}T_{n}(h_{H})T_{n}(g_{H})^{\frac{1}{2}}\right\|_{F}=o(n^{\epsilon})\ \ \mbox{as $n\to\infty$.}
	\end{equation*}
\end{lemma}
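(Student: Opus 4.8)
The plan is to pass from the Frobenius norm to the trace of a square and then exploit a cancellation that is invisible at the level of $\|\cdot\|_{F}$. Since $T_{n}(g_{H})$ and $T_{n}(h_{H})$ are real symmetric positive definite, the matrix $M_{n}:=I_{n}-T_{n}(g_{H})^{1/2}T_{n}(h_{H})T_{n}(g_{H})^{1/2}$ is real symmetric, so by cyclicity of the trace $\|M_{n}\|_{F}^{2}=\mathrm{Tr}[M_{n}^{2}]=n-2\mathrm{Tr}[T_{n}(h_{H})T_{n}(g_{H})]+\mathrm{Tr}[(T_{n}(h_{H})T_{n}(g_{H}))^{2}]=\mathrm{Tr}\bigl[(I_{n}-T_{n}(h_{H})T_{n}(g_{H}))^{2}\bigr]$. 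I would then invoke the elementary ``edge-effect'' identity for a product of Toeplitz matrices: since $h_{H}g_{H}\equiv 1/(4\pi^{2})$ and $T_{n}(1/(2\pi))=I_{n}$ under the Fourier normalisation used here, the full-line version of the convolution defining the matrix product is exactly the identity, so $T_{n}(h_{H})T_{n}(g_{H})=I_{n}-E_{n}$ with $E_{n}(i,\ell)=\sum_{k\le 0}\widehat{h_{H}}(i-k)\widehat{g_{H}}(k-\ell)+\sum_{k\ge n+1}\widehat{h_{H}}(i-k)\widehat{g_{H}}(k-\ell)$ collecting the out-of-range indices. Hence it suffices to show $\mathrm{Tr}[E_{n}^{2}]=\sum_{i,\ell=1}^{n}E_{n}(i,\ell)E_{n}(\ell,i)=o(n^{2\epsilon})$.

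The crucial point is that $E_{n}(i,\ell)E_{n}(\ell,i)$ is far smaller than $E_{n}(i,\ell)^{2}$, so $\mathrm{Tr}[E_{n}^{2}]$ is \emph{not} of the order of $\|E_{n}\|_{F}^{2}$. To quantify this I would use the classical decay bounds $|\widehat{g_{H}}(\tau)|\le C(1+|\tau|)^{2H-2}$ (the fractional-Gaussian-noise autocovariance asymptotics) and $|\widehat{h_{H}}(\tau)|\le C(1+|\tau|)^{-2H}$, which follow because $g_{H}$ and $h_{H}$ are comparable near the origin to $|\lambda|^{1-2H}$ and $|\lambda|^{2H-1}$ respectively and smooth away from it (isolate the $j=0$ term in (\ref{spd_fGn})). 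Inserting these into the two sums defining $E_{n}(i,\ell)$, using the evenness of $\widehat{g_{H}}$ and $\widehat{h_{H}}$ to rewrite each as a sum over $m\ge 0$, and applying H\"older's inequality with the conjugate exponents $1/H$ and $1/(1-H)$ (admissible since $H\in(0,1)$) reduces each factor to a convergent tail sum of the type $\sum_{m\ge 0}(a+m)^{-2}$ and yields the corner bound $|E_{n}(i,\ell)|\le C\bigl(i^{-H}\ell^{-(1-H)}+(n+1-i)^{-H}(n+1-\ell)^{-(1-H)}\bigr)$. Multiplying this by the analogous bound for $|E_{n}(\ell,i)|$, the dominant contribution is $i^{-H}\ell^{-(1-H)}\cdot\ell^{-H}i^{-(1-H)}=i^{-1}\ell^{-1}$, whose sum over $\{1,\dots,n\}^{2}$ is $O((\log n)^{2})$, while the remaining ``mixed-corner'' cross terms sum to $O(1)$ by a Beta-integral estimate. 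Therefore $\mathrm{Tr}[E_{n}^{2}]=O((\log n)^{2})$, and in fact $\|M_{n}\|_{F}=O(\log n)=o(n^{\epsilon})$ for every $\epsilon>0$, which is stronger than the stated conclusion.

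I expect the main obstacle to be the coefficient bound $|\widehat{h_{H}}(\tau)|\le C(1+|\tau|)^{-2H}$, since it concerns the \emph{reciprocal} symbol $h_{H}=1/(4\pi^{2}g_{H})$, whose origin-singularity $|\lambda|^{2H-1}$ is genuine whenever $H\ne 1/2$, and it has to hold uniformly over both the long-memory case $H>1/2$ (where $g_{H}$ blows up at $0$ while $h_{H}$ vanishes there) and the anti-persistent case $H<1/2$ (where $g_{H}$ vanishes at $0$ while $h_{H}$ is itself unbounded, with non-summable Fourier coefficients). Establishing this bound together with the accompanying control of $E_{n}$ near the two corners of the matrix---especially in the anti-persistent regime---is exactly the step that forces one beyond the non-degenerate situation and that adapts the Toeplitz-operator machinery of Dahlhaus and of Fox and Taqqu; note that the crude estimate $\|M_{n}\|_{F}\le\|E_{n}\|_{F}$ is of no use here, since it only yields the polynomial rate $n^{|2H-1|/2}$.
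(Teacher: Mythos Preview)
Your argument and the paper's both begin by expanding the squared Frobenius norm via cyclicity of the trace, arriving at $\|M_n\|_F^{2}=n-2\,\mathrm{Tr}[T_n(g_H)T_n(h_H)]+\mathrm{Tr}[(T_n(g_H)T_n(h_H))^{2}]$. From there the routes diverge. The paper simply regroups this as $-2\bigl(\mathrm{Tr}[T_n(g_H)T_n(h_H)]-n\bigr)+\bigl(\mathrm{Tr}[(T_n(g_H)T_n(h_H))^{2}]-n\bigr)$ and invokes a general trace-approximation result (Theorem~3.1 of \cite{Takabatake-2022-TraceApp}) asserting that each bracketed quantity is $o(n^{\epsilon})$; the whole proof is three lines modulo that citation. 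Your route is genuinely different: you recognise $\|M_n\|_F^{2}=\mathrm{Tr}[E_n^{2}]$ with $E_n=I_n-T_n(h_H)T_n(g_H)$ the edge-effect remainder, and you control $\sum_{i,\ell}|E_n(i,\ell)||E_n(\ell,i)|$ directly via the H\"older corner bound $|E_n(i,\ell)|\lesssim i^{-H}\ell^{-(1-H)}+(n{+}1{-}i)^{-H}(n{+}1{-}\ell)^{-(1-H)}$. This is more hands-on---no black-box theorem is invoked---and it delivers the sharper rate $\|M_n\|_F=O(\log n)$, at the price of having to establish the coefficient decay $|\widehat{h_H}(\tau)|\lesssim(1+|\tau|)^{-2H}$ for the reciprocal symbol and to justify the full-line convolution identity; you have correctly identified this as the only non-routine step, and it is indeed accessible by standard singular-Fourier-series arguments (for every $H\neq\tfrac12$ one of $\widehat{g_H},\widehat{h_H}$ lies in $\ell^{1}$, which also legitimises the interchange underlying the edge-effect decomposition). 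Your closing observation that the asymmetry of $E_n$ is essential---bounding $\|E_n\|_F$ instead of $\mathrm{Tr}[E_n^{2}]$ would only give $O(n^{|2H-1|/2})$---pinpoints precisely why the naive estimate fails and why the paper resorts to the trace-approximation machinery instead.
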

\begin{proof}
	First note that we can write
	\begin{align*}
		&\left\|I_{n}-T_{n}(g_{H})^{\frac{1}{2}}T_{n}(h_{H})T_{n}(g_{H})^{\frac{1}{2}}\right\|_{F}^{2}\\
		&=n-2\mathrm{Tr}\left[T_{n}(g_{H})T_{n}(h_{H})\right]+\mathrm{Tr}\left[T_{n}(g_{H})T_{n}(h_{H})T_{n}(g_{H})T_{n}(h_{H})\right]\\\
		&=-2\left(\mathrm{Tr}\left[T_{n}(g_{H})T_{n}(h_{H})\right]-n\right)
		+\left(\mathrm{Tr}\left[T_{n}(g_{H})T_{n}(h_{H})T_{n}(g_{H})T_{n}(h_{H})\right]-n\right).
	\end{align*}
	Then the conclusion follows from Theorem~3.1 of \cite{Takabatake-2022-TraceApp}. This completes the proof. 
\end{proof}
\subsection{Proof of $(\ref{lemma_Whittle_app1})$ in the case $j=0$}
Thanks to Theorems~4.1 and 5.2 of \cite{Adenstedt-1974}, it suffices to prove
\begin{equation*}
	\left\langle\mathbf{1}_{n},T_{n}(h_{H})\mathbf{1}_{n}\right\rangle_{\mathbb{R}^{n}}
	=\left\langle\mathbf{1}_{n},T_{n}(g_{H})^{-1}\mathbf{1}_{n}\right\rangle_{\mathbb{R}^{n}}
	+o(n^{2(1-H)+\epsilon})\ \ \mbox{as $n\to\infty$}
\end{equation*}
for any $\epsilon>0$. First  
we can show
\begin{align*}
	&\left|\left\langle\mathbf{1}_{n},T_{n}(h_{H})\mathbf{1}_{n}\right\rangle_{\mathbb{R}^{n}}
	-\left\langle\mathbf{1}_{n},T_{n}(g_{H})^{-1}\mathbf{1}_{n}\right\rangle_{\mathbb{R}^{n}}\right|\\
	&=\left|\left\langle\mathbf{1}_{n},\left(T_{n}(g_{H})^{-1}-T_{n}(h_{H})\right)\mathbf{1}_{n}\right\rangle_{\mathbb{R}^{n}}\right|\\
	&=\left|\left\langle T_{n}(g_{H})^{-\frac{1}{2}}\mathbf{1}_{n},\left(I_{n}-T_{n}(g_{H})^{\frac{1}{2}}T_{n}(h_{H})T_{n}(g_{H})^{\frac{1}{2}}\right)T_{n}(g_{H})^{-\frac{1}{2}}\mathbf{1}_{n}\right\rangle_{\mathbb{R}^{n}}\right|\\
	&\leq\left\|T_{n}(g_{H})^{-\frac{1}{2}}\mathbf{1}_{n}\right\|_{\mathbb{R}^{n}}
	\left\|\left(I_{n}-T_{n}(g_{H})^{\frac{1}{2}}T_{n}(h_{H})T_{n}(g_{H})^{\frac{1}{2}}\right)T_{n}(g_{H})^{-\frac{1}{2}}\mathbf{1}_{n}\right\|_{\mathbb{R}^{n}}\\
	&\leq\left\|T_{n}(g_{H})^{-\frac{1}{2}}\mathbf{1}_{n}\right\|_{\mathbb{R}^{n}}^{2}
	\left\|I_{n}-T_{n}(g_{H})^{\frac{1}{2}}T_{n}(h_{H})T_{n}(g_{H})^{\frac{1}{2}}\right\|_{F}.
\end{align*}
Then the conclusion 
follows from Lemma~\ref{ext_lemma5.2_D89} and Theorems~4.1 and 5.2 of \cite{Adenstedt-1974}. This completes the proof.
\subsection{Proof of $(\ref{lemma_Whittle_app1})$ in the case $j=1$}
First we can show
\begin{align*}
	\left|\left\langle\mathbf{1}_{n},T_{n}(h_{H}^{(1)})\mathbf{1}_{n}\right\rangle_{\mathbb{R}^{n}}\right|
	&\leq\left\langle\mathbf{1}_{n},T_{n}(|h_{H}^{(1)}|)\mathbf{1}_{n}\right\rangle_{\mathbb{R}^{n}}\\
	&=\left\langle T_{n}(h_{H})^{\frac{1}{2}}\mathbf{1}_{n},\left(T_{n}(h_{H})^{-\frac{1}{2}}T_{n}(|h_{H}^{(1)}|)T_{n}(h_{H})^{-\frac{1}{2}}\right)T_{n}(h_{H})^{\frac{1}{2}}\mathbf{1}_{n}\right\rangle_{\mathbb{R}^{n}}\\
	&\leq\left\|T_{n}(h_{H})^{\frac{1}{2}}\mathbf{1}_{n}\right\|_{\mathbb{R}^{n}}^{2}
		\left\|T_{n}(|h_{H}^{(1)}|)^{\frac{1}{2}}T_{n}(h_{H})^{-\frac{1}{2}}\right\|_{\mathrm{op}}^{2}.
\end{align*}
Then the conclusion 
follows from $(\ref{lemma_Whittle_app1})$ in the case $j=0$ and Lemma~2 in the full version of \cite{Lieberman-Rosemarin-Rousseau-2012}. 
This completes the proof.
\subsection{Proof of $(\ref{lemma_Whittle_app2})$}
First we introduce notation used in the proof. For $j=0,1$, set $g_{H}^{(j)}(\lambda):=(\partial/\partial{H})^{j}g_{H}(\lambda)$, 
\begin{align*}
	&C_{n}^{(j)}(H):=T_{n}(g_{H})^{-1}T_{n}(g_{H}^{(j)})T_{n}(g_{H})^{-1},\\
	&D_{n}^{(j)}(H):=T_{n}(g_{H})^{\frac{1}{2}}(T_{n}(h_{H}^{(j)})-C_{n}^{(j)}(H))T_{n}(g_{H})^{\frac{1}{2}},\\
	&\widetilde{C}_{n}^{(j)}(H):=T_{n}(g_{H})^{\frac{1}{2}}C_{n}^{(j)}(H)T_{n}(g_{H})^{\frac{1}{2}},\\
	&F_{n}(H):=\left\|D_{n}^{(1)}(H)\right\|_{F}^{2}=\mathrm{Tr}\left[\left\{\left(T_{n}(g_{H})T_{n}(h_{H}^{(1)})T_{n}(g_{H})-T_{n}(g_{H}^{(1)})\right)T_{n}(g_{H})^{-1}\right\}^{2}\right],\\
	&\widetilde{F}_{n}(H):=\mathrm{Tr}\left[\left\{\left(T_{n}(g_{H})T_{n}(h_{H}^{(1)})T_{n}(g_{H})-T_{n}(g_{H}^{(1)})\right)T_{n}(h_{H})\right\}^{2}\right].
\end{align*}
Note that $(\ref{lemma_Whittle_app2})$ follows once we have proven that 
	\begin{align}
		&\left\langle\mathbf{1}_{n},T_{n}(h_{H}^{(j)})T_{n}(g_{H})T_{n}(h_{H}^{(j)})\mathbf{1}_{n}\right\rangle_{\mathbb{R}^{n}} \nonumber\\
		&=\left\langle\mathbf{1}_{n},C_{n}^{(j)}(H)T_{n}(g_{H})C_{n}^{(j)}(H)\mathbf{1}_{n}\right\rangle_{\mathbb{R}^{n}}
		+o(n^{2(1-H)+\epsilon})\ \ \mbox{as $n\to\infty$} \label{Whittle_app3_key1}
	\end{align}
	holds for any $\epsilon>0$ and each $j=0,1$ because we can show
	\begin{equation*}
		\left\langle\mathbf{1}_{n},C_{n}^{(j)}(H)T_{n}(g_{H})C_{n}^{(j)}(H)\mathbf{1}_{n}\right\rangle_{\mathbb{R}^{n}}
		=o(n^{2(1-H)+\epsilon})\ \ \mbox{as $n\to\infty$}
	\end{equation*}
	for any $\epsilon>0$ and $j=0,1$ using 
	Theorem~5.2 of \cite{Adenstedt-1974} and Lemma~2 in the full version of \cite{Lieberman-Rosemarin-Rousseau-2012} 
	in the similar way to the proof of Lemma~5.4~(d) of \cite{Dahlhaus-1989}. 
	In the rest of the proof, we will prove (\ref{Whittle_app3_key1}). 
	First we can write
	\begin{align*}
		&\left\langle\mathbf{1}_{n},T_{n}(h_{H}^{(j)})T_{n}(g_{H})T_{n}(h_{H}^{(j)})\mathbf{1}_{n}\right\rangle_{\mathbb{R}^{n}}
		-\left\langle\mathbf{1}_{n},C_{n}^{(j)}(H)T_{n}(g_{H})C_{n}^{(j)}(H)\mathbf{1}_{n}\right\rangle_{\mathbb{R}^{n}}\\
		&=\left\langle\mathbf{1}_{n},\left(T_{n}(h_{H}^{(j)})-C_{n}^{(j)}(H)\right)T_{n}(g_{H})T_{n}(h_{H}^{(j)})\mathbf{1}_{n}\right\rangle_{\mathbb{R}^{n}}\\
		&\quad+\left\langle\mathbf{1}_{n},C_{n}^{(j)}(H)T_{n}(g_{H})\left(T_{n}(h_{H}^{(j)})-C_{n}^{(j)}(H)\right)\mathbf{1}_{n}\right\rangle_{\mathbb{R}^{n}}\\
		&=\left\langle\mathbf{1}_{n},\left(T_{n}(h_{H}^{(j)})-C_{n}^{(j)}(H)\right)T_{n}(g_{H})\left(T_{n}(h_{H}^{(j)})-C_{n}^{(j)}(H)\right)\mathbf{1}_{n}\right\rangle_{\mathbb{R}^{n}}\\
		&\quad+2\left\langle\mathbf{1}_{n},\left(T_{n}(h_{H}^{(j)})-C_{n}^{(j)}(H)\right)T_{n}(g_{H})C_{n}^{(j)}(H)\mathbf{1}_{n}\right\rangle_{\mathbb{R}^{n}}\\
		&=\left\langle T_{n}(g_{H})^{-\frac{1}{2}}\mathbf{1}_{n},D_{n}^{(j)}(H)^{2}T_{n}(g_{H})^{-\frac{1}{2}}\mathbf{1}_{n}\right\rangle_{\mathbb{R}^{n}}
		+2\left\langle T_{n}(g_{H})^{-\frac{1}{2}}\mathbf{1}_{n},D_{n}^{(j)}(H)\widetilde{C}_{n}^{(j)}(H)T_{n}(g_{H})^{-\frac{1}{2}}\mathbf{1}_{n}\right\rangle_{\mathbb{R}^{n}}
	\end{align*}
	so that we obtain
	\begin{align}
		&\left|\left\langle\mathbf{1}_{n},T_{n}(h_{H}^{(j)})T_{n}(g_{H})T_{n}(h_{H}^{(j)})\mathbf{1}_{n}\right\rangle_{\mathbb{R}^{n}}
		-\left\langle\mathbf{1}_{n},C_{n}^{(j)}(H)T_{n}(g_{H})C_{n}^{(j)}(H)\mathbf{1}_{n}\right\rangle_{\mathbb{R}^{n}}\right| \nonumber\\
		&\leq\left\|T_{n}(g_{H})^{-\frac{1}{2}}\mathbf{1}_{n}\right\|_{\mathbb{R}^{n}}^{2}
		\left\|D_{n}^{(j)}(H)\right\|_{\mathrm{op}}
		\left(\left\|D_{n}^{(j)}(H)\right\|_{\mathrm{op}}+2\left\|\widetilde{C}_{n}^{(j)}(H)\right\|_{\mathrm{op}}\right). \label{app2_bound1}
	\end{align}	
	Since we have
	\begin{align*}
		&\left\|D_{n}^{(j)}(H)\right\|_{\mathrm{op}}\leq\left\|D_{n}^{(j)}(H)\right\|_{F},\ \ \left\|D_{n}^{(0)}(H)\right\|_{F}=\left\|I_{n}-T_{n}(g_{H})^{\frac{1}{2}}T_{n}(h_{H})T_{n}(g_{H})^{\frac{1}{2}}\right\|_{F},\\ 
		&\left\|\widetilde{C}_{n}^{(0)}(H)\right\|_{\mathrm{op}}=1,\ \ \left\|\widetilde{C}_{n}^{(1)}(H)\right\|_{\mathrm{op}}\leq\left\|T_{n}(g_{H})^{-\frac{1}{2}}T_{n}(|g_{H}^{(1)}|)^{\frac{1}{2}}\right\|_{\mathrm{op}}^{2},
	\end{align*}
	(\ref{Whittle_app3_key1}) follows from (\ref{app2_bound1}), Lemma~$\ref{ext_lemma5.2_D89}$, Theorem~5.2 of \cite{Adenstedt-1974} and Lemma~2 in the full version of \cite{Lieberman-Rosemarin-Rousseau-2012} once we have proven
	\begin{equation}\label{suff_cond1_Whittle_app3}
		{F}_{n}(H)=o(n^{\epsilon})\ \ \mbox{as $n\to\infty$}
	\end{equation}
	for any $\epsilon>0$. Moreover, (\ref{suff_cond1_Whittle_app3}) follows once we have proven
	\begin{equation}\label{En}
		E_{n}(H):=\left|F_{n}(H)-\widetilde{F}_{n}(H)\right|
		=o(n^{\epsilon})\ \ \mbox{as $n\to\infty$}
	\end{equation}
	for any $\epsilon>0$ because Theorem~3.1 of \cite{Takabatake-2022-TraceApp} gives
	\begin{equation}\label{F_tilde}
		\widetilde{F}_{n}(H)=o(n^{\epsilon})\ \ \mbox{as $n\to\infty$}
	\end{equation}
	for any $\epsilon>0$. 
	Indeed, $\widetilde{F}_{n}(H)$ can be decomposed as the following three terms:
	\begin{align*}
		\widetilde{F}_{n}(H)&=\left(\mathrm{Tr}\left[\left(T_{n}(g_{H})T_{n}(h_{H}^{(1)})T_{n}(g_{H})T_{n}(h_{H})\right)^{2}\right]-nI(H)\right)\\
		&\quad-2\left(\mathrm{Tr}\left[T_{n}(g_{H})T_{n}(h_{H}^{(1)})T_{n}(g_{H})T_{n}(h_{H})\cdot T_{n}(g_{H}^{(1)})T_{n}(h_{H})\right]-nI(H)\right)\\
		&\quad+\left(\mathrm{Tr}\left[\left(T_{n}(g_{H}^{(1)})T_{n}(h_{H})\right)^{2}\right]-nI(H)\right),
	\end{align*}
	where
	\begin{equation*}
		I(H):
		=(2\pi)^{-1}\int_{-\pi}^{\pi}\left|\frac{g_{H}^{(1)}(\lambda)}{g_{H}(\lambda)}\right|^{2}\,\mathrm{d}\lambda.
	\end{equation*}
	Therefore (\ref{F_tilde}) follows from Theorem~3.1 of \cite{Takabatake-2022-TraceApp} since $g_{H}(\lambda)^{2}h_{H}^{(1)}(\lambda)h_{H}(\lambda)=(2\pi)^{-4}g_{H}^{(1)}(\lambda)/g_{H}(\lambda)$.

	In the rest of the proof, we will prove (\ref{En}). 
	First we can bound $E_{n}(H)$ as follows:
	\begin{equation*}
		E_{n}(H)\leq E_{n}^{1}(H)+2E_{n}^{2}(H)+E_{n}^{3}(H),
	\end{equation*}
	where
	\begin{align*}
		E_{n}^{1}(H)&:=\left|\mathrm{Tr}\left[\left(T_{n}(g_{H})T_{n}(h_{H}^{(1)})\right)^{2}\right]
		-\mathrm{Tr}\left[\left(T_{n}(g_{H})T_{n}(h_{H}^{(1)})T_{n}(g_{H})T_{n}(h_{H})\right)^{2}\right]\right|,\\
		E_{n}^{2}(H)&:=\left|\mathrm{Tr}\left[T_{n}(h_{H}^{(1)})T_{n}(g_{H}^{(1)})\right]
		-\mathrm{Tr}\left[T_{n}(g_{H})T_{n}(h_{H}^{(1)})T_{n}(g_{H})T_{n}(h_{H})T_{n}(g_{H}^{(1)})T_{n}(h_{H})\right]\right|,\\
		E_{n}^{3}(H)&:=\left|\mathrm{Tr}\left[\left(T_{n}(g_{H}^{(1)})T_{n}(g_{H})^{-1}\right)^{2}\right]
		-\mathrm{Tr}\left[\left(T_{n}(g_{H}^{(1)})T_{n}(h_{H})\right)^{2}\right]\right|.
	\end{align*}
	Note that we can easily prove $E_{n}^{1}(H)=o(n^{\epsilon})$ and $E_{n}^{2}(H)=o(n^{\epsilon})$ as $n\to\infty$ for any $\epsilon>0$ 
	in the similar way to the proof of (\ref{F_tilde}).  
	Moreover, we can also prove $E_{n}^{3}(H)=o(n^{\epsilon})$ as $n\to\infty$ for any $\epsilon>0$ in the similar way to the proof of Lemma~4 in the full version of \cite{Lieberman-Rosemarin-Rousseau-2012}. 
	Therefore we finish the proof.
\bibliographystyle{acmtrans-ims}
\bibliography{myref_tkbtk}

\end{document}